\theoremstyle{plain}
\newtheorem{Theorem}{\bfseries Theorem}
\newtheorem{Lemma}{\bfseries Lemma}
\theoremstyle{definition}
\newtheorem{Definition}{\bfseries Definition}
\newtheorem*{notation}{\bfseries Notation}
\newtheorem*{remarkA}{\bfseries Remark A}
\newcommand{\Z}{{\mathbb{Z}}}
\newcommand*{\Homol}{\operatorname{H}}
\begin{document}

\title{Complexifiable characteristic classes}
\author{Alexander D. Rahm}
\email{Alexander.Rahm@nuigalway.ie}
\address{National University of Ireland at Galway
\url{http://www.maths.nuigalway.ie/~rahm/}
}

\thanks{Funded by the Irish Research Council for Science, Engineering and Technology.}
\date{\today}
\classification{55R40, Homology of classifying spaces, \mbox{characteristic} classes.}
\keywords{Characteristic classes, Classifying spaces of groups and $H$-spaces, Stable classes of vector space bundles.}
\begin{abstract} We examine the
topological characteristic cohomology classes of complexified
vector bundles. In particular, all the classes coming from the real vector bundles underlying the complexification are determined.
\\
This article is dedicated to Mark F. Feshbach (1950-2010), for his valuable work on cohomology rings of classifying spaces.
\end{abstract} 

\received{July 14, 2012}   % receive date (for example: 11 October 1999)
\revised{\today}    % receive date
\published{Month Day, Year}  % publish date
\submitted{James D. Stasheff} 
\volumeyear{2013} % Volume Year
\volumenumber{1}  % Volume Number 
\issuenumber{1}   % Issue Number

\startpage{1} 
\maketitle

\setcounter{tocdepth}{3} \setcounter{secnumdepth}{1}

\section{Introduction and statement of the results}

In the theory of characteristic classes (in the sense of Milnor and Stasheff \cite{MilnorStasheff},
 whom we follow in terminology and notation in this article),
 it is well-known how the Chern classes are mapped to even Stiefel-Whitney classes
 when converting complex vector space bundles to real vector space bundles by forgetting the complex structure.
In the other direction, we have the fibre-wise complexification: 
 Given a real vector bundle $F \rightarrow B$ with fibre ${\mathbb{R}}^n$,
 its complexification is the complex vector bundle
 $F^{\mathbb{C}} := F\otimes_{\mathbb{R}} {\mathbb{C}} \rightarrow B$
 obtained by declaring complex multiplication on $F \oplus F$
 in each fibre ${\mathbb{R}}^n \oplus {\mathbb{R}}^n$ by $i(x, y) := (-y, x)$ for the imaginary unit~$i$.
The Pontrjagin classes of a real vector bundle are (up to a sign) constructed as Chern classes of its complexification.
Conversely, which classes of a real vector bundle can be attributed to its complexification?
 These are the \emph{complexifiable} characteristic classes which we determine in this article,
 under the request that they are characteristic classes in the sense of \cite{MilnorStasheff}.

Consider a real vector bundle $F \rightarrow B$ 
and a complex vector bundle $E \rightarrow B $ over the same paracompact Hausdorff base space $B$
(we keep the latter assumption on~$B$ throughout this article).

\begin{Definition} A real vector bundle $F$ is called a
\emph{real generator bundle} of~$E$, if its complexification $F^{\mathbb{C}} $ is isomorphic to~$E$.
In the case that such a bundle~$F$ exists, we call $E$ \emph{real-generated}.
\end{Definition}
Not every complex vector bundle is real-generated; 
as the odd degree Chern classes have the property $c_{2k+1}(\overline{E}) = -c_{2k+1}(E)$ 
on the complex conjugate bundle $\overline{E}$, it is an easy exercise to show that
 no complex vector bundle with some nonzero and non-$2$-torsion odd Chern class can admit a real generator bundle.
This makes it seem possible that the subcategory of real-generated vector bundles could admit information
 additional to its Chern classes, in terms of complexifiable classes of the real generator bundles.
However, we will see that the Chern classes already contain all of the relevant information.

\begin{Definition}
A characteristic class $c$ of real vector bundles is \mbox{\emph{complexifiable}} if for all pairs ($F$, $G$) of real vector bundles with isomorphic complexification $F^{\mathbb{C}} \cong G^{\mathbb{C}}$, the identity $c(F) = c(G)$ holds.
\end{Definition}

We will now give a complete classification of the complexifiable characteristic classes.
Denote by $\Z_2 := \Z/2\Z$ the group with two elements.
\begin{Theorem} Let $c$ be a polynomial in the Stiefel-Whitney classes $w_i$, $i \in {{\mathbb{N}}} \cup \{0\}$. 
Then the following two conditions are equivalent:
\begin{itemize}
 \item[(i)] The class $c$ is an element of the sub-ring
${\mathbb{Z}}_2 [ w_i^2]_{i \in {{\mathbb{N}}} \cup \{0\}}$ of the polynomials in the Stiefel-Whitney classes.
\item[(ii)] The class $c$ is complexifiable.
\end{itemize}
\end{Theorem}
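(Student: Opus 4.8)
\emph{Plan.} The direction (i)$\Rightarrow$(ii) is quick; for (ii)$\Rightarrow$(i) I would confront complexifiability with one carefully chosen pair of bundles over the homotopy fibre of the complexification map $\mathbb{C}\colon BO\to BU$.

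\emph{(i)$\Rightarrow$(ii).} The underlying real bundle of $F^{\mathbb{C}}$ is $F\oplus F$ directly from the definition above, so the Whitney formula gives $w\bigl((F^{\mathbb{C}})_{\mathbb{R}}\bigr)=w(F)^{2}$; comparing with $w_{2i}(E_{\mathbb{R}})=c_{i}(E)\bmod 2$ and $w_{2i+1}(E_{\mathbb{R}})=0$ for complex $E$ yields $c_{i}(F^{\mathbb{C}})\equiv w_{i}(F)^{2}\pmod 2$, i.e.\ $\mathbb{C}^{*}(c_{i})=w_{i}^{2}$, whence $\operatorname{im}\bigl(\mathbb{C}^{*}\colon\Homol^{*}(BU;\Z_{2})\to\Homol^{*}(BO;\Z_{2})\bigr)=\Z_{2}[w_{i}^{2}]$. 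A class $c=\mathbb{C}^{*}(d)$ then satisfies $c(F)=d(F^{\mathbb{C}})$ and so is complexifiable.

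\emph{(ii)$\Rightarrow$(i), the substantial direction.} Fix $n>\deg c$ and consider the homotopy fibre sequence $U(n)/O(n)\xrightarrow{\iota}BO(n)\xrightarrow{\mathbb{C}}BU(n)$. Since $\mathbb{C}^{*}$ is injective with $\Homol^{*}(BO(n);\Z_{2})=\Z_{2}[w_{1},\dots,w_{n}]$ free as a module over its image, the Eilenberg--Moore spectral sequence of this fibration collapses and identifies
\[ \Homol^{*}(U(n)/O(n);\Z_{2})\;\cong\;\Z_{2}[w_{1},\dots,w_{n}]/(w_{1}^{2},\dots,w_{n}^{2}), \]
with $\iota^{*}$ the quotient map onto this exterior algebra on classes $\bar w_{i}$ of degree $i$. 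As $\mathbb{C}\circ\iota$ is nullhomotopic, the real bundle $\iota^{*}\gamma_{n}$ over $U(n)/O(n)$ has trivial complexification while its total Stiefel--Whitney class is $1+\bar w_{1}+\dots+\bar w_{n}$, which is ``as nondegenerate as possible''. Over $Y:=BO(n)\times U(n)/O(n)$ I would then take $F:=\operatorname{pr}_{1}^{*}\gamma_{n}\oplus\operatorname{pr}_{2}^{*}\iota^{*}\gamma_{n}$ and $G:=\operatorname{pr}_{1}^{*}\gamma_{n}\oplus\underline{\mathbb{R}}^{\,n}$, so that $F^{\mathbb{C}}\cong\operatorname{pr}_{1}^{*}\gamma_{n}^{\mathbb{C}}\oplus\underline{\mathbb{C}}^{\,n}\cong G^{\mathbb{C}}$; complexifiability of $c$ then forces $c(F)=c(G)$ in $\Homol^{*}(Y;\Z_{2})=\Z_{2}[w_{1},\dots,w_{n}]\otimes\Lambda(\bar w_{1},\dots,\bar w_{n})$.

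Since $w(F)=w(G)\cdot(1+\bar w_{1}+\dots+\bar w_{n})$, this equality means exactly that $c$, regarded in the tensor product via $w_{i}\mapsto w_{i}\otimes1$, is fixed by the ring endomorphism $\theta$ with $\theta(\bar w_{i})=\bar w_{i}$ and $\theta(w_{k})=w_{k}+\sum_{b\ge 1}\bar w_{b}\,w_{k-b}$ (convention $w_{0}=1$). Specialising all $\bar w_{j}$ to $0$ except one, sent to a variable $s$ with $s^{2}=0$, the relation $\theta(c)=c$ becomes $\mathcal{D}_{j}(c)=0$, where $\mathcal{D}_{j}:=\sum_{k\ge j}w_{k-j}\,\partial/\partial w_{k}$ is the derivation with $\mathcal{D}_{j}(w_{k})=w_{k-j}$. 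So $c\in\bigcap_{j\ge 1}\ker\mathcal{D}_{j}$, and the proof is finished by the algebraic lemma that $\bigcap_{j\ge 1}\ker\mathcal{D}_{j}=\Z_{2}[w_{i}^{2}]$. Here $\supseteq$ is clear ($\Z_{2}$-derivations kill squares); for $\subseteq$, if a polynomial $P$ has a monomial with some odd exponent, let $j$ be the largest index occurring with odd exponent anywhere in $P$: then $\mathcal{D}_{j}$ annihilates every monomial of $P$ in which no index $\ge j$ has odd exponent (in particular every square monomial) and sends each monomial $m$ with $j=\max\{i:a_{i}\text{ odd}\}$ to $m/w_{j}$, so $\mathcal{D}_{j}(P)$ is a nonzero $\Z_{2}$-combination of distinct monomials --- contradiction.

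\emph{The step I expect to be the main obstacle is the choice of test bundles.} The naive candidates --- real bundles over tori or real projective spaces assembled from pullbacks of Möbius bundles, which complexify to trivial bundles --- are \emph{insufficient}: for instance $w_{1}^{3}+w_{1}w_{2}+w_{3}$ is not a square yet agrees with its constant term on all of them. What rescues the argument is that the homotopy fibre $U/O$ of $\mathbb{C}$, whose mod $2$ cohomology is an exterior algebra with a generator in every positive degree (although its homotopy and $K$-theory are intricate), carries the single real bundle $\iota^{*}\gamma$ that is maximally Stiefel--Whitney-nontrivial while complexifying to a trivial bundle; combined with the universal situation over $BO$, this one bundle already detects every non-square characteristic class, which is exactly the content of the lemma above.
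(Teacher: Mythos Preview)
Your proof is correct, and its geometric setup coincides with the paper's: both exploit the fibre sequence $U/O\xrightarrow{\iota}BO\to BU$, the fact that $\iota^*\gamma$ has trivial complexification while its Stiefel--Whitney classes hit the generators of the exterior algebra $H^*(U/O;\Z_2)$, and then test complexifiability over $BO\times U/O$ by comparing the universal bundle plus $\iota^*\gamma$ against the universal bundle plus a trivial summand. (The paper cites Cartan's direct computation of $H^*(U/O;\Z_2)$ rather than your Eilenberg--Moore collapse, but the output is the same.)

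Where you genuinely diverge is in the algebra that extracts the conclusion. The paper first observes (its Lemma~1) only that $c(\gamma)-c(\varepsilon)$ lies in the \emph{ideal} $\langle\omega_i^2\rangle$, which is far larger than the subring $\Z_2[\omega_i^2]$, and then runs a rather heavy induction (its Lemma~2): one repeatedly restricts to $BO_\ell$ for a carefully chosen $\ell$, uses injectivity of cupping with $w_\ell^2$ to peel off a ``low situated rest term'', shows that term again lies in $\ker\iota^*$, re-decomposes, and iterates until the degree is exhausted. You bypass all of this by reading $c(F)=c(G)$ as invariance of $c$ under the explicit substitution $\theta$, specialising one $\bar w_j$ at a time to a square-zero element to obtain $\mathcal{D}_j(c)=0$ for every $j\ge 1$, and finishing with the short maximal-odd-index monomial argument showing $\bigcap_j\ker\mathcal{D}_j=\Z_2[w_i^2]$. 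Your route is markedly shorter and more transparent; the paper's truncation machinery avoids any spectral sequence and is entirely elementary once Cartan's result is quoted, but pays for this with substantial combinatorial bookkeeping that your derivation lemma renders unnecessary.
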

The implication (i)$\Rightarrow$(ii) follows easily from the fact that the square of the
 $n$-th Stiefel-Whitney class of a real vector bundle is the mod-$2$-reduction of the
 $n$-th Chern class of the complexified vector bundle.
The proof of the implication (ii)$\Rightarrow$(i) is prepared with several intermediary steps leading to it.
One ingredient, Lemma~\ref{CartanIdeal}, follows essentially from work of Cartan on fibrations of H-spaces
(at Cartan's time called Hopf spaces).
 But this only allows us to show that complexifiable characteristic classes in cohomology with
 ${\mathbb{Z}}_2$--coefficients are contained in the \emph{ideal} generated by the squares of the Stiefel-Whitney classes.
 To show that they constitute exactly the \emph{subring} generated by the squares of the Stiefel-Whitney classes,
 which is much smaller, we need the technical decomposition of Lemma~\ref{decomposition} that we prove by induction.

By their naturality, characteristic classes are uniquely determined on the universal bundle over the classifying space
 ($B{\mathcal{O}}$ for real vector bundles). 
As the cohomology ring $ \Homol^*(B{\mathcal{O}}, {\mathbb{Z}}_2)$
 is generated by the Stiefel-Whitney classes of the universal bundle,
 all modulo $2$ characteristic classes are polynomials in the Stiefel-Whitney classes,
 and Theorem~1 tells us which of them are complexifiable.

We build on this result to investigate which integral cohomology classes are complexifiable.
To express our result, we use Feshbach's description~\cite{Feshbach} of the cohomology ring of the classifying space
 $B{\mathcal{O}}$ with ${\mathbb{Z}}$--coefficients.
Generators for this ring are known since Thomas~\cite{polynomialAlgebras}, \cite{realGrassmann}, 
and all the relations between its generators are known since Brown~\cite{Brown} and Feshbach~\cite{Feshbach}. 
Consider the Steenrod squaring operation $Sq^1$ and the mod--2--reduction homomorphism 
$$\rho : \Homol^*(B{\mathcal{O}},{\mathbb{Z}})\rightarrow \Homol^*(B{\mathcal{O}},{\mathbb{Z}}_2).$$
As generators for $\Homol^*(B{\mathcal{O}},{\mathbb{Z}})$, 
Feshbach uses Pontrjagin classes and classes $V_I$ with index sets~$I$ that are finite nonempty subsets of
$\left\{ \frac{1}{2} \right\} \cup {\mathbb{N}},$
admitting mod--2--reductions $$\rho (V_I) = Sq^1 \left( \bigcup\limits_{i \hspace{1mm} \in
\hspace{1mm} I} \omega_{2i}\right),$$
where $\omega_i$ is the $i$-th Stiefel Whitney class of the universal bundle over $B{\mathcal{O}}$.
In particular, we have a generator $V_{\{\frac{1}{2}\}}$.
We give the details of Feshbach's description in the appendix.
Our final result now takes the following shape.
\begin{Theorem}
Let $C$ be a polynomial in $V_I^2$, with $I$ arbitrary, $V_{\{\frac{1}{2}\}}$
and the Pontrjagin classes. Then $C$ is complexifiable.
\end{Theorem}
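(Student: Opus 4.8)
The plan is to exploit that the complexifiable classes form a subring of the ring of characteristic classes (if $c(F)=c(G)$ and $c'(F)=c'(G)$ for every pair with $F^{\mathbb{C}}\cong G^{\mathbb{C}}$, then the same holds for $c+c'$, for $c\cdot c'$, and for integer multiples), so that it suffices to prove that each of $p_i$, $v_{\{\frac{1}{2}\}}$ and $v_I^2$ is complexifiable. I would in fact show that each of these three lies in the image of the homomorphism $c^*\colon\Homol^*(B\mathcal{U};\Z)\to\Homol^*(B\mathcal{O};\Z)$ induced by the complexification map $c\colon B\mathcal{O}\to B\mathcal{U}$ (under which the universal complex bundle pulls back to the complexification of the universal real bundle), together with the elementary remark that any class $c^*(D)$ is complexifiable: if $F^{\mathbb{C}}\cong G^{\mathbb{C}}$ then the classifying maps $f,g\colon B\to B\mathcal{O}$ of $F$ and $G$ satisfy $c\circ f\simeq c\circ g$ (both classifying $F^{\mathbb{C}}\cong G^{\mathbb{C}}$), whence $f^*c^*(D)=(cf)^*(D)=(cg)^*(D)=g^*c^*(D)$. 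For the Pontrjagin classes this is simply their definition $p_i=(-1)^i c^*(c_{2i})$. For $v_{\{\frac{1}{2}\}}$ it is a computation in degree two: since $\Homol^*(B\mathcal{O};\mathbb{Q})=\mathbb{Q}[p_1,p_2,\dots]$ has no class of degree $2$, and the torsion of $\Homol^2(B\mathcal{O};\Z)$ is $\operatorname{Ext}(\Homol_1(B\mathcal{O};\Z),\Z)=\Z_2$ (as $\Homol_1(B\mathcal{O};\Z)=\Z_2$), we get $\Homol^2(B\mathcal{O};\Z)\cong\Z_2$; both $c^*(c_1)$ and $V_{\{\frac{1}{2}\}}$ are its nonzero element, since each reduces mod $2$ to $Sq^1\omega_1=\omega_1^2\neq 0$, so $v_{\{\frac{1}{2}\}}=c^*(c_1)$.

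The heart of the argument is the case $v_I^2$. The first step I would take is to observe that $\rho(v_I)=Sq^1\!\bigl(\bigcup_{i\in I}\omega_{2i}\bigr)$ lies in \emph{odd} total degree; hence, in the polynomial ring $\Homol^*(B\mathcal{O};\Z_2)=\Z_2[\omega_1,\omega_2,\dots]$, every monomial appearing in $\rho(v_I)$ is divisible by $\omega_m$ for some odd $m$, and therefore — squaring being additive modulo $2$ — every monomial appearing in $\rho(v_I^2)=\rho(v_I)^2$ is divisible by $\omega_m^2$ for some odd $m$. (Alternatively, Theorem~1 already shows $\rho(v_I^2)\in\Z_2[\omega_j^2]$, hence that it is complexifiable modulo $2$; but it is this finer ``odd-factor'' structure that I need in order to pass to $\Z$-coefficients.) Recalling the identity $\rho(c^*(c_k))=\omega_k^2$ that underlies the implication (i)$\Rightarrow$(ii) of Theorem~1, I let $D_I\in\Z[c_1,c_2,\dots]=\Homol^*(B\mathcal{U};\Z)$ be the polynomial with coefficients in $\{0,1\}$ obtained from the expansion of $\rho(v_I^2)$ in the $\omega_j^2$ by substituting $c_j$ for $\omega_j^2$. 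Then $\rho(c^*(D_I))=\rho(v_I^2)$, and — here the odd-factor observation is essential — $c^*(D_I)$ is $2$-torsion: for odd $m$ the class $c^*(c_m)$ is the $m$-th Chern class of a complexification, hence $2$-torsion (as noted in the introduction), and every monomial of $D_I$ contains such a factor.

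It then remains to upgrade the equality $\rho(v_I^2)=\rho(c^*(D_I))$ to the integral identity $v_I^2=c^*(D_I)$. By Feshbach's description~\cite{Feshbach}, every torsion class in $\Homol^*(B\mathcal{O};\Z)$ has order $2$ and the quotient of $\Homol^*(B\mathcal{O};\Z)$ by its torsion subgroup is the torsion-free polynomial ring $\Z[p_1,p_2,\dots]$; in particular $v_I^2$ is $2$-torsion (the class $V_I$ being torsion), and $c^*(D_I)$ is $2$-torsion by the previous step, so the difference $x:=v_I^2-c^*(D_I)$ is a $2$-torsion class lying in $\ker\rho=2\,\Homol^*(B\mathcal{O};\Z)$. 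Writing $x=2y$, the image of $x$ in $\Z[p_1,p_2,\dots]$ is both $0$ (as $x$ is torsion) and $2\bar{y}$, so $\bar{y}=0$; hence $y$ is torsion, hence $2$-torsion, and $x=2y=0$. Thus $v_I^2=c^*(D_I)$ is complexifiable, and combined with the first paragraph this proves that every polynomial $C$ in the classes $v_I^2$, $v_{\{\frac{1}{2}\}}$ and $p_i$ is complexifiable. The step I expect to be the main obstacle is precisely this last upgrade from mod-$2$ to integral coefficients: one must guarantee that the integral lift of $\rho(v_I^2)$ can be chosen $2$-torsion — which is exactly what forces the odd-factor analysis of the second paragraph — and then invoke the precise torsion structure of $\Homol^*(B\mathcal{O};\Z)$ furnished by Feshbach.
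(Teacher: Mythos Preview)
Your argument is correct and takes a genuinely different route from the paper. The paper proves complexifiability of $v_I^2$ by computing an explicit formula (Lemma~\ref{polynomial}) expressing $\rho\bigl(v_I^2(\xi)\bigr)$ as a polynomial in Stiefel--Whitney classes of $\xi\oplus\xi$; since $F^{\mathbb C}\cong G^{\mathbb C}$ implies $F\oplus F\cong G\oplus G$, this yields $\rho\bigl(v_I^2(F)\bigr)=\rho\bigl(v_I^2(G)\bigr)$, and Feshbach's injectivity of $\rho$ on torsion finishes. You instead prove the sharper structural fact that $v_I^2$ actually lies in the image of $c^*\colon\Homol^*(BU;\Z)\to\Homol^*(B\mathcal{O};\Z)$: from $\rho(v_I^2)=\rho(v_I)^2\in\Z_2[\omega_j^2]$ and $\rho\bigl(c^*(c_j)\bigr)=\omega_j^2$ you build an integral lift $c^*(D_I)$, the odd-factor observation guarantees this lift is $2$-torsion, and then the same injectivity-on-torsion identifies it with $v_I^2$. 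Your approach buys a cleaner conceptual conclusion (every generator is pulled back from $BU$, not merely complexifiable) and avoids the explicit Steenrod-square expansion; the paper's route is more concrete and sidesteps the need to analyse the torsion of the lift.

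One small correction is needed. Your parity claim that $\rho(v_I)=Sq^1\bigl(\bigcup_{i\in I}\omega_{2i}\bigr)$ has odd total degree fails when $\tfrac12\in I$: then the product contains the factor $\omega_1$ and has odd degree, so after $Sq^1$ the degree is even. The odd-factor conclusion you need survives nonetheless, because in that case $Sq^1(\omega_1\cup P)=\omega_1^2\cup P+\omega_1\cup Sq^1(P)=\omega_1\cup\bigl(\omega_1\cup P+Sq^1(P)\bigr)$ is divisible by $\omega_1$; hence every monomial of $\rho(v_I)$ still carries an odd-index factor, every monomial of $D_I$ still contains some $c_m$ with $m$ odd, and the rest of your argument goes through unchanged.
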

And  conversely, we can say the following.
\begin{Theorem} Let $C$  be a complexifiable integral characteristic class.
Then for any real vector bundle $\xi$,
$C(\xi)$ is completely determined by some Chern classes
$c_k(\xi^{\mathbb{C}})$, $k \in \mathbb{N}$.
\end{Theorem}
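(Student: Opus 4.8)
The plan is to prove the \emph{stronger} assertion that a complexifiable integral characteristic class $C$ is in fact a polynomial in the Chern classes of the complexification; concretely, that $C$ lies in the subring of $\Homol^*(B{\mathcal{O}};\Z)$ generated by the classes $c_i(\gamma^{\mathbb{C}})$, where $\gamma$ denotes the universal real vector bundle over $B{\mathcal{O}}$. Granting this, naturality applied to the classifying map of an arbitrary real vector bundle $\xi$ shows that $C(\xi)$ equals the corresponding finite polynomial in the $c_k(\xi^{\mathbb{C}})$, and is thus completely determined by them.

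The first step is to reduce modulo~$2$. If $F^{\mathbb{C}}\cong G^{\mathbb{C}}$ then $C(F)=C(G)$ by hypothesis, hence $\rho(C)(F)=\rho(C)(G)$; so $\rho(C)$ is a complexifiable characteristic class, necessarily a polynomial in the $\omega_i$ because $\Homol^*(B{\mathcal{O}};\Z_2)=\Z_2[\omega_i]$, and theorem~1 forces $\rho(C)\in\Z_2[\omega_i^2]$. Since each $\omega_i^2$ equals $\rho(c_i(\gamma^{\mathbb{C}}))$, I take the polynomial in the $\omega_i^2$ representing $\rho(C)$, lift its $\{0,1\}$ coefficients to $\Z$, and read the result as a polynomial $P$ in the integral classes $c_i(\gamma^{\mathbb{C}})$; as $\rho$ is a ring homomorphism, $\rho(P)=\rho(C)$. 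The Bockstein exact sequence of $0\to\Z\xrightarrow{2}\Z\xrightarrow{\rho}\Z_2\to 0$ then yields $C-P=2D$ for some $D\in\Homol^*(B{\mathcal{O}};\Z)$.

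The second step is to absorb the error term $2D$. By Feshbach's description (recalled in the appendix), $D$ is a polynomial in the Pontrjagin classes $p_i$ and the generators $V_I$, which are of order~$2$; since $2V_I=0$, every monomial of $D$ that involves at least one $V_I$ is annihilated by multiplication by~$2$, so $2D$ equals twice the $V$-free part of $D$, that is, a polynomial in the $p_i$. As $p_i$ is, up to sign, the Chern class $c_{2i}(\gamma^{\mathbb{C}})$, the class $2D$ — and therefore $C=P+2D$ — is a polynomial in Chern classes of the complexified universal bundle, which completes the argument. (In agreement with theorem~2, one checks directly that $v_{\{\frac{1}{2}\}}=c_1(\gamma^{\mathbb{C}})$, while the classes $p_i$ and $v_I^2$ are exactly what the two steps above account for.)

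The step I expect to be the genuine obstacle is the second one: matching mod-$2$ reductions determines $C$ only up to a class divisible by~$2$, and controlling that ambiguity requires the precise structure of $\Homol^*(B{\mathcal{O}};\Z)$ — that all its torsion has order~$2$ and that its torsion-free part is freely generated by the Pontrjagin classes — which is precisely where Feshbach's computation is used. A convenient feature of this route is that it never requires disentangling the mixed products of the generators $V_I$, the more delicate bookkeeping underlying theorem~2.
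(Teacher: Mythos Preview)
Your proof is correct and relies on the same two ingredients as the paper --- Theorem~1 for the mod-$2$ reduction and Feshbach's description of $\Homol^*(B{\mathcal{O}};\Z)$ --- but you arrange them differently and obtain a sharper conclusion. The paper first invokes Feshbach's additive splitting $C=P(p_i)+T$ into a Pontrjagin polynomial plus a $2$-torsion class, then applies Theorem~1 to see that $\rho(T)$ lies in $\Z_2[w_j^2]$, and finally expresses $T(\xi)$ via the partial inverse $\rho|_{\{2\text{-torsion}\}}^{-1}$ applied to $Q(\rho(c_j(\xi^{\mathbb{C}})))$; this shows that $C(\xi)$ is \emph{determined} by the Chern classes, but only through that inexplicit inverse. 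You instead lift $\rho(C)\in\Z_2[\omega_i^2]$ directly to an integral polynomial $P$ in the classes $c_i(\gamma^{\mathbb{C}})$ and then observe that the discrepancy $C-P=2D$ is automatically a polynomial in the $p_i$, because multiplication by~$2$ annihilates every monomial containing a Feshbach generator $V_I$. This yields the stronger and more explicit statement that $C$ is \emph{literally} a polynomial in the Chern classes of the complexification, with no auxiliary inverse map needed; the paper's formulation ``completely determined by'' is thereby upgraded to ``a polynomial in''.
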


\section{Classes in cohomology with ${\mathbb{Z}}_2$--coefficients}

In this section, we shall prove Theorem~1, after developing all the tools we need to do so. 
For this entire section, we only consider cohomology with ${\mathbb{Z}}_2$--coefficients.
We write ${\mathbb{N}}$ for the natural numbers without $0$.
\\
 Let $ F \rightarrow B $ be a real vector bundle over a paracompact Hausdorff base space.
 Let $c$ be a complexifiable polynomial in the Stiefel-Whitney classes $w_i$.
Let ${\mathcal{O}}$ be the direct limit of the orthogonal groups,
$U$ the direct limit of the unitary groups and $EU$ the universal total
space to the classifying space $BU$ for stable complex vector
bundles. Let $B{\mathcal{O}} := EU/{\mathcal{O}}$, via the
inclusion ${\mathcal{O}}\subset U$ induced by the canonical
inclusion ${\mathbb{R}}\subset{\mathbb{C}}$.
Let $\gamma({\mathbb{R}}^\infty)$ be the universal bundle over $B{\mathcal{O}}$, and denote its Stiefel-Whitney classes by
\mbox{$\omega_i :=w_i(\gamma({\mathbb{R}}^\infty)) $}.
Let $\varepsilon$ be the trivial vector bundle.

\begin{Lemma} \label{CartanIdeal}
Let $c$ be a complexifiable class in cohomology with ${\mathbb{Z}}_2$--coefficients. 
\\ Then $ c(\gamma({\mathbb{R}}^\infty)) -c(\varepsilon)\hspace{1mm}$ 
is contained in the ideal
$\langle\omega_i^2\rangle_{i
\hspace{1mm} \in \hspace{1mm} {\mathbb{N}}}
.$
\end{Lemma}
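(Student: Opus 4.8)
The plan is to pass to the universal situation on $B{\mathcal{O}} = EU/{\mathcal{O}}$ and to exploit the fibration $\pi : B{\mathcal{O}} \to BU$ (with fibre $U/{\mathcal{O}}$ and fibre inclusion $\iota : U/{\mathcal{O}} \hookrightarrow B{\mathcal{O}}$) induced by ${\mathcal{O}} \subset U$. With this model $\pi$ classifies the complexification $\gamma({\mathbb{R}}^\infty)^{\mathbb{C}}$ of the universal real bundle, and $\iota$ lands in a single fibre of $\pi$, so $\pi\circ\iota$ is the constant map; hence the real bundle $\iota^*\gamma({\mathbb{R}}^\infty)$ over $U/{\mathcal{O}}$ has trivial complexification, exactly as the trivial bundle $\varepsilon$ does. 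Complexifiability of $c$ then forces $c(\iota^*\gamma({\mathbb{R}}^\infty)) = c(\varepsilon)$ in $\Homol^*(U/{\mathcal{O}}; {\mathbb{Z}}_2)$, and by naturality the left side is $\iota^*\big(c(\gamma({\mathbb{R}}^\infty))\big)$ and the right side is $\iota^*\big(c(\varepsilon)\big)$. Thus $c(\gamma({\mathbb{R}}^\infty)) - c(\varepsilon) \in \ker \iota^*$, and the whole problem reduces to computing this kernel.

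I would then show $\ker \iota^* = \langle \omega_i^2 \rangle_{i \in {\mathbb{N}} \setminus \{0\}}$. One inclusion is immediate: the mod--$2$ reduction of the $i$-th Chern class of a complexification is $\omega_i^2$, so $\pi^*$ carries the mod--$2$ Chern class $c_i$ to $\omega_i^2$, whence $\iota^*(\omega_i^2) = (\pi\circ\iota)^*(c_i) = 0$ and $\langle \omega_i^2 \rangle \subseteq \ker \iota^*$. The reverse inclusion is the substantial point, and for it one uses that $\Homol^*(B{\mathcal{O}}; {\mathbb{Z}}_2) = {\mathbb{Z}}_2[\omega_1, \omega_2, \dots]$ is \emph{free} as a module over $\Homol^*(BU; {\mathbb{Z}}_2) = {\mathbb{Z}}_2[c_1, c_2, \dots]$ via $\pi^*$, a basis being the square-free monomials $\prod_{i \in S} \omega_i$; equivalently $\Homol^*(B{\mathcal{O}}; {\mathbb{Z}}_2) \cong \Homol^*(BU; {\mathbb{Z}}_2) \otimes_{{\mathbb{Z}}_2} \big( \Homol^*(B{\mathcal{O}}; {\mathbb{Z}}_2) / \langle \omega_i^2 \rangle \big)$ as $\Homol^*(BU;{\mathbb{Z}}_2)$--modules.

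Since $BU$ is simply connected, the Eilenberg--Moore spectral sequence of $\pi$ has $E_2$--term $\operatorname{Tor}_{\Homol^*(BU; {\mathbb{Z}}_2)}\big({\mathbb{Z}}_2, \Homol^*(B{\mathcal{O}}; {\mathbb{Z}}_2)\big)$ and converges to $\Homol^*(U/{\mathcal{O}}; {\mathbb{Z}}_2)$; by the freeness just recorded the higher $\operatorname{Tor}$ groups vanish, the spectral sequence collapses onto the $0$-th column, there is no extension problem, and the edge homomorphism, which is exactly $\iota^*$, is the quotient map $\Homol^*(B{\mathcal{O}}; {\mathbb{Z}}_2) \twoheadrightarrow \Homol^*(B{\mathcal{O}}; {\mathbb{Z}}_2) / \langle \omega_i^2 \rangle$. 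This is the concrete incarnation of Cartan's description of $\Homol^*(U/{\mathcal{O}}; {\mathbb{Z}}_2)$ as an exterior algebra, which could be quoted directly in its place. Therefore $\ker \iota^* = \langle \omega_i^2 \rangle$, and combined with the first paragraph this proves the lemma.

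The step I expect to be the main obstacle is precisely this cohomological identification $\ker \iota^* = \langle \omega_i^2 \rangle$: the cheap inclusion only shows that complexifiable ${\mathbb{Z}}_2$--classes sit inside \emph{some} ideal containing the squares, and pinning the kernel down exactly --- equivalently, the injectivity of $\Homol^*(B{\mathcal{O}}; {\mathbb{Z}}_2) / \langle \omega_i^2 \rangle \to \Homol^*(U/{\mathcal{O}}; {\mathbb{Z}}_2)$ --- is where the freeness of $\Homol^*(B{\mathcal{O}}; {\mathbb{Z}}_2)$ over $\Homol^*(BU; {\mathbb{Z}}_2)$, i.e.\ Cartan's computation, is genuinely needed. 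By contrast the naturality argument of the first paragraph is soft, and the sharpening from this ideal to the much smaller subring ${\mathbb{Z}}_2[\omega_i^2]$ is not attempted here but is carried out via lemma~\ref{decomposition}.
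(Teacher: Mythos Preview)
Your proposal is correct and follows essentially the same route as the paper: both use the fibration $U/\mathcal{O} \xrightarrow{\iota} B\mathcal{O} \xrightarrow{\pi} BU$, note that $\iota^*\gamma(\mathbb{R}^\infty)$ has trivial complexification since $\pi\circ\iota$ is constant, apply complexifiability and naturality to place $c(\gamma(\mathbb{R}^\infty))-c(\varepsilon)$ in $\ker\iota^*$, and then identify $\ker\iota^*=\langle\omega_i^2\rangle$. The only cosmetic difference is that the paper simply cites Cartan's computation of $\Homol^*(U/\mathcal{O};\mathbb{Z}_2)$ as the exterior algebra on the $\iota^*\omega_i$, whereas you rederive this via freeness and the Eilenberg--Moore spectral sequence---an option you already flag as interchangeable with a direct citation.
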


\begin{proof}
We use Cartan's fibration of H-spaces \cite{Cartan}*{p. 17-22} (fibration en espaces de Hopf),
$$ \xymatrix{ U/ {\mathcal{O}} \ar[rr]^f & & B{\mathcal{O}} \ar@{->>}[rr]^p &  & BU.} $$ 
The cohomology ring $\Homol^*(B{\mathcal{O}},{\mathbb{Z}}_2)$ 
is the polynomial algebra ${\mathbb{Z}}_2[\omega_1,\omega_2,...]$ 
with generators the Stiefel-Whitney classes of the universal bundle.
Cartan \cite{Cartan}*{p. 17-22} has shown that $f^*$ maps these
generators $\omega_i$ to the generators $\nu_i :=w_i(f^* \gamma
({\mathbb{R}}^\infty)) $ of the exterior algebra
$$\Homol^*(U/{\mathcal{O}},{\mathbb{Z}}_2) = \bigwedge
({\mathbb{Z}}_2[\nu_1,\nu_2,...]),$$ which is obtained by dividing out the
ideal $\langle \nu_i^2\rangle_{i\hspace{1mm} \in \hspace{1mm} {\mathbb{N}}} $ 
of the polynomial algebra
${\mathbb{Z}}_2[\nu_1,\nu_2,...]$. Hence, exactly the ideal $\langle
\omega_i^2\rangle_{i\hspace{1mm} \in \hspace{1mm}{\mathbb{N}}}$ is mapped to zero. So,
$$
\langle \omega_i^2\rangle_{i \hspace{1mm} \in
\hspace{1mm}{\mathbb{N}}} = \ker f^*.
$$
Composing $f$ with the projection $ p: B{\mathcal{O}} \to BU $, we
obtain a constant map and therefore a trivial bundle $(p\circ f)^*\gamma({\mathbb{C}}^\infty) $. 
This pullback of the complex universal bundle is the complexification of $ f^*\gamma({\mathbb{R}}^\infty)$:
$$ (p\circ f)^*\gamma({\mathbb{C}}^\infty) = f^*p^*EU\times_U
{\mathbb{C}}^\infty =
f^*E{\mathbb{{\mathcal{O}}}}\times_{\mathcal{O}}
{\mathbb{C}}^\infty = f^*(E{\mathcal{O}}\times_{\mathcal{O}}
{\mathbb{R}}^\infty)^{\mathbb{C}}$$
 $$=  f^*\gamma ({\mathbb{R}}^\infty)^{\mathbb{C}} = (
f^*\gamma ({\mathbb{R}}^\infty))\mathbb{^C} .$$  So, $ f^*\gamma
({\mathbb{R}}^\infty)$ admits a trivial complexification, and all
of the complexifiable classes $c$ must treat it like
the trivial bundle $\varepsilon$:
\\
$c(f^* \gamma ({\mathbb{R}}^\infty))=c(\varepsilon)$. A pullback
of the trivial bundle is trivial too, so
$$ 0 = c(f^*\gamma({\mathbb{R}}^\infty)) -c(f^*\varepsilon)
=f^*(c(\gamma ({\mathbb{R}}^\infty)) -c(\varepsilon))$$ by naturality. Whence, $c(\gamma
({\mathbb{R}}^\infty))-c(\varepsilon)$ is an element of the kernel of $f^*$, which we have identified with the ideal $\langle\omega_i^2\rangle_{i
\hspace{1mm} \in \hspace{1mm} {\mathbb{N}}}
.$
\end{proof}

The above lemma allows us to write the characteristic class $c$ under investigation
as a sum over products with squares of Stiefel-Whitney classes,
 $$c(\gamma({\mathbb{R}}^\infty))-c(\varepsilon) = \sum\limits_{j = 1}^m \omega_{i_{j}}^2 \cup r_{j}(\gamma
({\mathbb{R}}^\infty)),$$
 with $r_{j}$ some polynomials in the Stiefel-Whitney classes.
We must inductively identify squares of Stiefel-Whitney classes as factors of the remainders $r_{j}$, 
until we achieve the decomposition claimed in the following lemma.

\vbox{
\begin{notation}
 For indices $j_1,...,j_{s} \in {\mathbb{N}}$ and
$i_{j_1}, ..., i_{(j_1,...,j_s)}  \in {\mathbb{N}}$,
we shall write 
\mbox{$\vec{j}_s := (j_1,...,j_{s})$} and
$I(\vec{j}_s) := \{ i_{\vec{j}_1}, ..., i_{\vec{j}_s} \}$.
We set $\vec{j}_0 := 0$.
\end{notation}
Note that the classes $c(\varepsilon),$
$r_{\vec{j}}(\varepsilon)$ of the trivial bundle $\varepsilon$ 
that we are going to use now, are just coefficients in 
\mbox{$\Homol^0(B{\mathcal{O}},{\mathbb{Z}}_2) \cong \{0,1\}$}.

\begin{Lemma} \label{decomposition}
Any complexifiable characteristic class $c$ admits a decomposition 

\vspace{2mm}

$c(\gamma
({\mathbb{R}}^\infty))-c(\varepsilon) 
= \left(\sum\limits_{j_{k}=1}^{m_{\vec{j}_{k-1}}} \omega_{i_{\vec{j}_k}}^2  r_{\vec{j}_k}(\gamma({\mathbb{R}}^\infty))\right)
\cup \left( \bigcup\limits_{n=1}^{k-1}
\sum\limits_{j_n = 1}^{m_{\vec{j}_{n-1}}} \omega_{i_{\vec{j}_n}}^2 
\right)  
+ \sum\limits_{s = 1}^{k-1} \bigcup\limits_{n=1}^{s}  \sum\limits_{j_{n} = 1}^{m_{\vec{j}_{n-1}}}
\omega_{i_{\vec{j}_{n}}}^2  r_{\vec{j}_{n}}(\varepsilon)$

\vspace{4mm}

 for some $k, m_{\vec{j}_0}, ..., m_{\vec{j}_{k-1}}$
$\in$ ${\mathbb{N}} \cup \{ 0 \}$, some $i_{\vec{j}_1}, ...,
i_{\vec{j}_k}$ $\in$ ${\mathbb{N}}$,

\vspace{2mm}

some $r_{\vec{j}_k}(\gamma ({\mathbb{R}}^\infty))$
$\in$ $\Homol^*(B{\mathcal{O}},{\mathbb{Z}}_2)$,
 and some coefficients $r_{\vec{j}_1}(\varepsilon), ...,
r_{\vec{j}_{k-1}}(\varepsilon)$ $\in$ $ \{ 0, 1 \}$,

\vspace{2mm}

such that the following 
inequality holds: 
$2\sum\limits_{p \hspace{1mm} \in \hspace{1mm} I({\vec{j}_k})} p
> \deg c.$
\end{Lemma}
}

\begin{remarkA}
Once that this lemma is established, we use that the degree must be the
same on both sides 
 in order to deduce that the sum over all terms
containing a factor $ \bigcup\limits_{p \hspace{1mm} \in \hspace{1mm} I(\vec{j}_k)} \omega_p^2$ 
% of too high degree
%$\left(2\sum\limits_{p \hspace{1mm} \in \hspace{1mm} I(\vec{j})} p \right)$ 
exceeding the degree of $c$ via the requested inequality must already be zero.
So in fact, the decomposition is of the form
$$c(\gamma
({\mathbb{R}}^\infty))-c(\varepsilon) 
= \sum\limits_{s = 1}^{k-1} \bigcup\limits_{n=1}^{s}  \sum\limits_{j_{n} = 1}^{m_{\vec{j}_{n-1}}}
\omega_{i_{\vec{j}_{n}}}^2 \cup r_{\vec{j}_{n}}(\varepsilon),$$
meaning that $c(\gamma ({\mathbb{R}}^\infty))$ is a polynomial in
some squares $\omega_p^2$ , \mbox{$p$ $\in$ ${\mathbb{N}} \cup \{ 0\}$}, 
which implies Theorem 1, (ii)$\Rightarrow$(i).
\end{remarkA}

Before giving the proof of Lemma~\ref{decomposition}, 
we shall introduce two notations just to make that proof more readable.
\begin{Definition} An index vector $\vec{j}$
\emph{appears} in a given decomposition of \\
\mbox{$c(\gamma ({\mathbb{R}} ^\infty))-c(\varepsilon)$} 
if both $\left(2\sum\limits_{p \hspace{1mm} \in \hspace{1mm} I(\vec{j})} p \right) \leq \deg c$
\\ 
and this decomposition admits a summand of the form 
$r_{\vec{j}}(\gamma ({\mathbb{R}} ^\infty)) \cup \bigcup\limits_{p \hspace{1mm} \in \hspace{1mm} I(\vec{j})} \omega_p^2$.
\end{Definition} %\begin{Remark} 
Note that the terms $\left( r_{\vec{j}}(\gamma
({\mathbb{R}} ^\infty)) \cup \bigcup\limits_{p \hspace{1mm} \in
\hspace{1mm} I(\vec{j})} \omega_p^2 \right)$ with \hspace{1mm}
$\left(2\sum\limits_{p \hspace{1mm} \in \hspace{1mm} I(\vec{j})} p
> \deg c \right)$ must
vanish in any decomposition of $c(\gamma ({\mathbb{R}}
^\infty))-c(\varepsilon)$. That is why we do not let them contribute
in the last definition.
%\end{Remark}

\begin{Definition} Set $\ell := \min\limits_{\vec{j}
\hspace{1mm} \mathrm{appears}} \max I(\vec{j}). $ Consider an
index vector $\vec{j}$ appearing in a given decomposition of
$c(\gamma ({\mathbb{R}} ^\infty))-c(\varepsilon)$. \\
If $\max I(\vec{j}) = \ell$, then we call $r_{\vec{j}}(\gamma ({\mathbb{R}}
^\infty))- r_{\vec{j}}(\varepsilon)$ a \emph{lower degree remainder}.
\end{Definition}

As seen in Lemma \ref{CartanIdeal}, $c(\gamma ({\mathbb{R}} ^\infty))-c(\varepsilon)$
lies in $ \ker f^* = \langle\omega_i^2\rangle_{i \in
{\mathbb{N}}} $, so there is a decomposition
$$c(\gamma ({\mathbb{R}}^\infty))-c(\varepsilon)
  = \sum\limits_{j_1 = 1}^m \omega_{i_{\vec{j}_1}}^2 \cup r_{\vec{j}_1}(\gamma
({\mathbb{R}}^\infty)),$$ for some $m$ $\in$ ${\mathbb{N}}
\cup \{ 0 \}$, some $i_{\vec{j}_1}$ $\in$ ${\mathbb{N}}$, and some $r_{\vec{j}_1}(\gamma ({\mathbb{R}}^\infty))$ $\in$
$\Homol^*(B{\mathcal{O}},{\mathbb{Z}}_2)$. We will show that there is a
lower degree remainder $r_{\vec{j}_1}(\gamma ({\mathbb{R}}^\infty))
-r_{\vec{j}_1}(\varepsilon)$ in this decomposition that lies in $ \ker
f^*$. Then, that lower degree remainder admits a decomposition as
a linear combination of squares $\omega_{i_{\vec{j}_2}}^2$ with
coefficients $r_{\vec{j}_2}(\gamma ({\mathbb{R}}^\infty))$
in~$\Homol^*(B{\mathcal{O}},{\mathbb{Z}}_2)$, leading to a new decomposition of \mbox{$c(\gamma({\mathbb{R}}^\infty))-c(\varepsilon) $.}
So, inductively, we will replace a lower degree remainder in any
given decomposition of
$c(\gamma({\mathbb{R}}^\infty))-c(\varepsilon) $ by a linear
combination the coefficients of which are remainders with longer index
vectors. That is why after a finite number of these steps, the
index vectors $\vec{j}$ will no longer appear, because the sums
$\left(2\sum\limits_{p \hspace{1mm} \in \hspace{1mm} I(\vec{j})} p\right)$
will exceed the degree of $c$. This is the moment when all lower degree remainders are eliminated and the decomposition described
in Lemma~\ref{decomposition} is achieved.

To carry out this strategy, we first need to introduce the following truncation procedure.

\subsection{Truncated stable invariance}
With Lemma \ref{truncation}, we shall give a sense to ``the truncation of the equation $c(F \oplus G) = c(G)$ at the dimension
$\ell$''. Define the bundles
$$F := pr_1^*f^*\gamma({\mathbb{R}}^\infty)\longrightarrow U/{\mathcal{O}} \times B{\mathcal{O}}$$ 
and
$$G := pr_2^*\gamma({\mathbb{R}}^\infty)\longrightarrow U/{\mathcal{O}} \times B{\mathcal{O}},$$
where $pr_i$ is the projection on the $i$-th factor of the
base space \mbox{$U/{\mathcal{O}} \times B{\mathcal{O}}$.} 
Let $\ell \in  {\mathbb{N}}$. Consider the map
$$(id, emb_l): (U/ {\mathcal{O}}\times B{\mathcal{O}}_\ell)
\hookrightarrow (U/ {\mathcal{O}}\times B{\mathcal{O}})$$ where
$emb_l:  B{\mathcal{O}}_\ell \hookrightarrow B{\mathcal{O}}$ is the natural embedding into the direct limit.  
Then the bundle $G_l := (id, emb_l)^*G$ 
admits Stiefel-Whitney classes that are in bijective correspondence with those of the $\ell$-dimensional universal bundle
$\gamma_l({\mathbb{R}}^\infty)\rightarrow B{\mathcal{O}}_\ell$.

To be precise, $G_l \cong
{pr_{B{\mathcal{O}}_\ell}}^*\gamma_l({\mathbb{R}}^\infty)$ and  the
situation is $$\xymatrix{ \gamma_l({\mathbb{R}}^\infty) \ar[d]
& G_l \cong {pr_{B{\mathcal{O}}_\ell}}^*\gamma_l({\mathbb{R}}^\infty)
\ar[d] & G:=pr_2^*\gamma({\mathbb{R}}^\infty) \ar[d] &
\gamma({\mathbb{R}}^\infty) \ar[d]
\\ B{\mathcal{O}}_\ell & (U/ {\mathcal{O}}\times
B{\mathcal{O}}_\ell) \ar[l]_{pr_{B{\mathcal{O}}_\ell}}
\ar@{^{(}->}[r]^{(id,emb_l)} & (U/ {\mathcal{O}}\times
B{\mathcal{O}}) \ar[r]^{pr_2} & B{\mathcal{O}}. }$$\hfill

Especially, $w_p(G_l)$ vanishes for $p > \ell$.
Compare the latter statements with~\cite{MilnorStasheff}.

\begin{Lemma} \label{truncation}
Under the above assumptions, the following equation holds:
$$\sum\limits_{\vec{j} \hspace{1mm}
{\mathrm{appears}}}^{ \max I(\vec{j}) \hspace{1mm}\leq
\hspace{1mm}\ell } r_{\vec{j}}(F \oplus G_l)
\bigcup\limits_{p\hspace{1mm} \in \hspace{1mm} I(\vec{j})}
w_p^2(G_l)=\sum\limits_{\vec{j} \hspace{1mm} {\mathrm{appears}}}^{
\max I(\vec{j}) \hspace{1mm}\leq \hspace{1mm}\ell } r_{\vec{j}}(G_l)
\bigcup\limits_{p\hspace{1mm} \in \hspace{1mm} I(\vec{j})}
w_p^2(G_l).$$
We will call it \emph{the equation $c(F \oplus G) = c(G)$ truncated at dimension~$\ell$}.
\end{Lemma}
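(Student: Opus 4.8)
The plan is to proceed in three steps: deduce the honest identity $c(F\oplus G)=c(G)$; restrict it to the finite stage $B\mathcal{O}_\ell$ in the second variable; and then expand both sides through the given decomposition of $c(\gamma(\mathbb{R}^\infty))-c(\varepsilon)$ and cancel everything except the asserted terms, the cancellation resting on a squaring identity available only over $\mathbb{Z}_2$.

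\emph{Step 1.} By the proof of Lemma~\ref{CartanIdeal}, $f^*\gamma(\mathbb{R}^\infty)$ has trivial complexification, and hence so does its pullback $F=pr_1^*f^*\gamma(\mathbb{R}^\infty)$. Thus $(F\oplus G)^{\mathbb{C}}=F^{\mathbb{C}}\oplus G^{\mathbb{C}}$ agrees with $G^{\mathbb{C}}$ up to a trivial complex summand; as $c$ is complexifiable and a polynomial in the Stiefel-Whitney classes is insensitive to trivial summands, this forces $c(F\oplus G)=c(G)$ in $\Homol^*(U/\mathcal{O}\times B\mathcal{O},\mathbb{Z}_2)$. Pulling this identity back along $(id,emb_l)$ — where $(id,emb_l)^*F$ is again the pullback of $f^*\gamma(\mathbb{R}^\infty)$ along the first projection and $(id,emb_l)^*G=G_l$ — naturality of $c$ gives
$$c(F\oplus G_l)=c(G_l)\qquad\text{in}\quad\Homol^*(U/\mathcal{O}\times B\mathcal{O}_\ell,\mathbb{Z}_2).$$

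\emph{Step 2.} The given decomposition of $c(\gamma(\mathbb{R}^\infty))-c(\varepsilon)$ is an identity between polynomials in the $\omega_i$, hence an identity of characteristic classes, so it may be evaluated on any real bundle. Evaluating it on $F\oplus G_l$ and on $G_l$ and subtracting makes the constant $c(\varepsilon)$ disappear, leaving the difference of the terms $r_{\vec{j}}(\,\cdot\,)\cup\bigcup_{p\in I(\vec{j})}w_p^2(\,\cdot\,)$ and of the constant-coefficient terms $r_{\vec{j}}(\varepsilon)\cup\bigcup_{p\in I(\vec{j})}w_p^2(\,\cdot\,)$. The key observation is that for every $p$ one has
$$w_p^2(F\oplus G_l)=w_p^2(G_l).$$
Indeed, by the Whitney formula $w_p(F\oplus G_l)=\sum_{a+b=p}w_a(F)\,w_b(G_l)$, and since squaring is additive modulo $2$ this yields $w_p^2(F\oplus G_l)=\sum_{a+b=p}w_a(F)^2\,w_b(G_l)^2$; but $w_a(F)=pr_1^*\nu_a$ with $\nu_a^2=0$ in the exterior algebra $\Homol^*(U/\mathcal{O},\mathbb{Z}_2)$ by Cartan's computation, so only the summand $a=0$ survives. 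Consequently each factor $\bigcup_{p\in I(\vec{j})}w_p^2$ takes the same value on $F\oplus G_l$ as on $G_l$, the constant-coefficient terms cancel outright, and one is left with
$$\sum_{\vec{j}}\bigl(r_{\vec{j}}(F\oplus G_l)-r_{\vec{j}}(G_l)\bigr)\cup\bigcup_{p\in I(\vec{j})}w_p^2(G_l)=0.$$

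\emph{Step 3.} Since $G_l\cong {pr_{B\mathcal{O}_\ell}}^*\gamma_l(\mathbb{R}^\infty)$ has rank $\ell$, $w_p(G_l)=0$ for $p>\ell$; hence every summand with $\max I(\vec{j})>\ell$ already vanishes and may be omitted, while index vectors with $2\sum_{p\in I(\vec{j})}p>\deg c$ do not occur in the decomposition at all. Transposing the surviving terms is precisely the asserted equation. The one place where a purely formal argument does not suffice — and which I therefore expect to be the main obstacle — is the squaring identity $w_p^2(F\oplus G_l)=w_p^2(G_l)$ of Step 2; it is the conjunction of the additivity of the Frobenius over $\mathbb{Z}_2$ with Cartan's relations $\nu_a^2=0$ that makes the whole truncation device go through, which is also why it is confined to cohomology with $\mathbb{Z}_2$--coefficients.
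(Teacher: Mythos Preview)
Your argument is correct and follows the same overall architecture as the paper's proof: deduce $c(F\oplus G)=c(G)$ from the trivial complexification of $F$, restrict to $B\mathcal{O}_\ell$, expand both sides via the given decomposition, invoke $w_p^2(F\oplus G_l)=w_p^2(G_l)$, and then discard summands with $\max I(\vec{j})>\ell$ or with total degree exceeding $\deg c$.

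The one genuine difference is your justification of the key identity $w_p^2(F\oplus G_l)=w_p^2(G_l)$. The paper obtains it by quoting the easy direction (i)$\Rightarrow$(ii) of Theorem~1, i.e.\ the fact that $w_p^2$ is complexifiable (ultimately because $w_p^2(\xi)=\rho\,c_p(\xi^{\mathbb C})$). You instead compute it directly: expand $w_p(F\oplus G_l)$ by the Whitney formula, square using the Frobenius over $\mathbb{Z}_2$, and kill every term with $a>0$ via Cartan's relation $\nu_a^2=0$ in $\Homol^*(U/\mathcal{O},\mathbb{Z}_2)$. This is a pleasant alternative: it stays entirely within the machinery already set up in Lemma~\ref{CartanIdeal} and avoids any forward reference to Theorem~1, at the price of being specific to the particular bundle $F$ rather than to arbitrary bundles of trivial complexification. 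Either route is adequate here.
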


\begin{proof}
The bundle $F$ inherits from $f^*\gamma({\mathbb{R}}^\infty)$ the
property of admitting a trivial complexification.
 As $c$ is complexifiable, we have $c(F \oplus G) = c(G).$
Applying the induced cohomology map $(id, emb_l)^*$ to this equation, we obtain
$$c(id^*F \oplus emb_l^*G) = c(emb_l^*G)$$
and hence
$$ c(F \oplus G_l) = c(G_l).$$
By the universality of $\gamma({\mathbb{R}}^\infty)$, and the
naturality of all characteristic classes with respect to the classifying
maps of $G_l$ and $F \oplus G_l$, any given decomposition
 $$c(\gamma
({\mathbb{R}}^\infty)) - c(\varepsilon) = \sum\limits_{\vec{j}}
r_{\vec{j}}(\gamma ({\mathbb{R}}^\infty))
\bigcup\limits_{p\hspace{1mm} \in \hspace{1mm} I(\vec{j})}
\omega_p^2$$  gives analogous decompositions
 $$c(G_l) - c(\varepsilon) =
\sum\limits_{\vec{j}} r_{\vec{j}}(G_l)
\bigcup\limits_{p\hspace{1mm} \in \hspace{1mm} I(\vec{j})}
w_p^2(G_l)$$  and $$c( F \oplus G_l) - c(\varepsilon) =
\sum\limits_{\vec{j} } r_{\vec{j}}(F \oplus G_l)
\bigcup\limits_{p\hspace{1mm} \in \hspace{1mm} I(\vec{j})}
w_p^2(F \oplus G_l).$$ By Theorem 1, (i)$\Rightarrow$(ii) the square $w_p^2$ is complexifiable
and hence invariant under adding the bundle $F$ of trivial complexification :
$$w_p^2(F \oplus G_l)= w_p^2(G_l).$$
Thus, the equation $c(F \oplus G_l) = c(G_l)$ can be rewritten using that all summands containing a factor
 $w_p(G_l)$ with $p > \ell$
vanish:
$$ \sum\limits_{\vec{j} }^{ \max
I(\vec{j}) \hspace{1mm}\leq \hspace{1mm}\ell } r_{\vec{j}}(F \oplus
G_l) \bigcup\limits_{p\hspace{1mm} \in \hspace{1mm}
I(\vec{j})} w_p^2(G_l)=\sum\limits_{\vec{j}}^{ \max I(\vec{j})
\hspace{1mm}\leq \hspace{1mm}\ell } r_{\vec{j}}(G_l)
\bigcup\limits_{p\hspace{1mm} \in \hspace{1mm} I(\vec{j})}
w_p^2(G_l).$$
In order not to exceed the degree of $c$, also all terms
with $2\sum\limits_{p \hspace{1mm} \in \hspace{1mm}
I(\vec{j})} p
> \deg c$ must vanish:
$$ \sum\limits_{\vec{j} \hspace{1mm}
{\mathrm{appears}}}^{ \max I(\vec{j}) \hspace{1mm}\leq
\hspace{1mm}\ell } r_{\vec{j}}(F \oplus G_l)
\bigcup\limits_{p\hspace{1mm} \in \hspace{1mm} I(\vec{j})}
w_p^2(G_l)=\sum\limits_{\vec{j} \hspace{1mm} {\mathrm{appears}}}^{
\max I(\vec{j}) \hspace{1mm}\leq \hspace{1mm}\ell } r_{\vec{j}}(G_l)
\bigcup\limits_{p\hspace{1mm} \in \hspace{1mm} I(\vec{j})}
w_p^2(G_l).$$
So, this last equation is the
equation $c(F \oplus G) = c(G)$ truncated at the dimension~$\ell$.
\end{proof}

\textit{Proof of Lemma \rm{\ref{decomposition}}}.
We carry out the proof by \textit{induction over the index vector identifying a lower degree remainder}.
${}$ \\
\textbf{Base case}. Lemma \ref{CartanIdeal} implies $c(\gamma
({\mathbb{R}}^\infty))-c(\varepsilon) $
 $ = \sum\limits_{j_1 = 1}^m \omega_{i_{\vec{j}_1}}^2 \cup r_{\vec{j}_1}(\gamma
({\mathbb{R}}^\infty))$, \\
 with $r_{\vec{j}_1}$ some polynomials in the Stiefel-Whitney classes.
\\
Rename $i_1, ..., i_m$ such that $i_1 < i_2 < ... < i_m$.
\\
We truncate the equation $c(F \oplus G) = c(G)$ at the dimension $i_1 $, and obtain
$$\sum\limits_{\vec{j}_1 \hspace{1mm} {\mathrm{appears}}}^{ i_{j_1}
\hspace{1mm}\leq \hspace{1mm} i_1 } r_{\vec{j}_1}(F \oplus G_{i_1}) \cup
w_{i_{\vec{j}_1}}^2(G_{i_1}) 
= \sum\limits_{\vec{j}_1 \hspace{1mm}{\mathrm{appears}}}^{ i_{j_1} \hspace{1mm}\leq \hspace{1mm} i_1 }
r_{\vec{j}_1}(G_{i_1}) \cup w_{i_{\vec{j}_1}}^2(G_{i_1}).$$
As $i_1 < i_2 < ... < i_m$,
this is just $ r_{1}(F \oplus G_{i_1}) \cup
w_{i_{1}}^2(G_{i_1})$ $ =
 r_{1}(G_{i_1}) \cup
w_{i_{1}}^2(G_{i_1})$.
\\
\\
Injectivity of the multiplication map $\cup w_{i_{1}}^2(G_{i_1})$
in $\Homol^*(U/ {\mathcal{O}}\times B{\mathcal{O}}_{i_1},
{\mathbb{Z}}_2)$ then holds
\\
 $ r_{1}(F \oplus G_{i_1}) = r_{1}(G_{i_1}) $.
Then we pull this back with $$(id \times const) : U/ {\mathcal{O}}
\rightarrow (U/ {\mathcal{O}}\times B{\mathcal{O}}_{i_1}),$$
(where the map $const$ takes just one, arbitrary, value), to obtain
$$r_1(f^*\gamma ({\mathbb{R}}^\infty) \oplus \varepsilon)
 = r_1(\varepsilon).$$
 Due to the Whitney sum formula, the Stiefel-Whitney classes in which $r_1$ is a polynomial are stable under adding a trivial bundle; 
 and the above left hand term equals
$r_1(f^*\gamma ({\mathbb{R}}^\infty)).$
\\
 Using naturality of characteristic classes with respect to pullbacks,
 this shows that
\\
\mbox{$r_1(\gamma({\mathbb{R}}^\infty)) - r_1(\varepsilon)$} lies in
$\ker f^*$. So we can replace it with a linear (over the field with 2 elements) combination of strictly quadratic terms, providing a new decomposition,
$$c(\gamma ({\mathbb{R}}^\infty))-c(\varepsilon)  = \omega_{i_1}^2  \sum\limits_{j_2 = 1}^{m_1} \omega_{i_{(1,j_2)}}^2  r_{(1,j_1)}(\gamma
({\mathbb{R}}^\infty)) + \omega_{i_1}^2  r_1(\varepsilon) + \sum\limits_{j_1 = 2}^m \omega_{i_{j_1}}^2  r_{j_1}(\gamma
({\mathbb{R}}^\infty)).$$
\textbf{Induction hypothesis}.
Consider a given decomposition $$c(\gamma({\mathbb{R}}^\infty))-c(\varepsilon)  
= \left(\sum\limits_{\vec{j}_k}
r_{\vec{j}_k}(\gamma ({\mathbb{R}}^\infty))
\bigcup\limits_{p\hspace{1mm} \in \hspace{1mm} I(\vec{j}_k)}
\omega_p^2\right)
 +\sum\limits_{s = 1}^{k-1} \bigcup\limits_{n=1}^{s}  \sum\limits_{j_{n} = 1}^{m_{\vec{j}_{n-1}}}
\omega_{i_{\vec{j}_{n}}}^2 \cup r_{\vec{j}_{n}}(\varepsilon).$$
 \textbf{Inductive claim}. The decomposition of the induction hypothesis admits a lower degree remainder that lies in $\ker f^*$.
We show this in the inductive step.
\\
\\
\textbf{Inductive step}. We truncate the equation $c(F \oplus G) = c(G)$
at the dimension $$\ell := \min\limits_{\vec{j} \hspace{1mm}
{\mathrm{appears}}} \max I(\vec{j}).$$ Then the remaining terms of
$c(G_l) - c(\varepsilon)$ do all have the common factor
$w_l^2(G_l)$. This is not a zero divisor in $\Homol^*(U/{\mathcal{O}}\times B{\mathcal{O}}_{\ell}, {\mathbb{Z}}_2)$ 
and furthermore its multiplication map $ \cup w_l^2(G_l)$ is injective.
Now, in $ c(F \oplus G_l) = c(G_l)$, this injectivity implies $$\sum\limits_{\vec{j} \hspace{1mm}
{\mathrm{appears}}}^{ \max I(\vec{j}) \hspace{1mm}\leq
\hspace{1mm}\ell } r_{\vec{j}}(F \oplus G_l)
\bigcup\limits_{p\hspace{1mm} \in \hspace{1mm} I(\vec{j})
\setminus \{ \ell \}} w_p^2(G_l)=\sum\limits_{\vec{j} \hspace{1mm}
{\mathrm{appears}}}^{ \max I(\vec{j}) \hspace{1mm}\leq
\hspace{1mm}\ell } r_{\vec{j}}(G_l) \bigcup\limits_{p\hspace{1mm}
\in \hspace{1mm} I(\vec{j}) \setminus \{ \ell \}} w_p^2(G_l).$$

\vspace{4mm}

$\diamondsuit$ If there is just one lower degree remainder
$r_{\vec{j}}(\gamma ({\mathbb{R}} ^\infty))-
r_{\vec{j}}(\varepsilon)$, then we use the injectivity of the
multiplication map 
\begin{center}
$ \cup \left( \bigcup\limits_{p\hspace{1mm} \in
\hspace{1mm} I(\vec{j}) \setminus \{ \ell \}} w_p^2(G_l) \right)$
on $\Homol^*(U/ {\mathcal{O}}\times B{\mathcal{O}}_{\ell}, {\mathbb{Z}}_2)$
\end{center}
to obtain $ r_{\vec{j}}(F \oplus G_l) =
r_{\vec{j}}(G_l)$.
Then we pull this back with
$$(id \times const) : U/ {\mathcal{O}} \rightarrow (U/
{\mathcal{O}}\times B{\mathcal{O}}_{\ell})$$
to obtain
$r_{\vec{j}}(f^*\gamma ({\mathbb{R}} ^\infty) \oplus
\varepsilon)= r_{\vec{j}}(\varepsilon)$. 
Using naturality, we see now that the
lower degree remainder $r_{\vec{j}}(\gamma ({\mathbb{R}}
^\infty))- r_{\vec{j}}(\varepsilon)$ lies in $\ker f^*$.

%\vspace{6mm}

\vspace{6mm}

 $\diamondsuit$ Otherwise, we truncate the remaining equation
again at the dimension $$\ell' := {\min\limits_{\vec{j} \hspace{1mm}
{\mathrm{appears}}}^{\max I(\vec{j}) = \ell}} \max (I(\vec{j})
\setminus \{ \ell\}),$$
so as to obtain 
$$\sum\limits_{\vec{j} \hspace{1mm} {\mathrm{appears}}}^{ \max
(I(\vec{j}) \setminus \{ \ell\} ) \hspace{1mm}\leq \hspace{1mm}\ell' }
r_{\vec{j}}(F \oplus G_{\ell'}) \bigcup\limits_{p\hspace{1mm}
\in \hspace{1mm} (I(\vec{j})\setminus \{ \ell\} )}
w_p^2(G_{\ell'})$$ 
$$= \sum\limits_{\vec{j} \hspace{1mm}
{\mathrm{appears}}}^{ \max (I(\vec{j}) \setminus \{ \ell\} )
\hspace{1mm}\leq \hspace{1mm}\ell' } r_{\vec{j}}(G_{\ell'})
\bigcup\limits_{p\hspace{1mm} \in \hspace{1mm}
(I(\vec{j})\setminus \{ \ell\} )} w_p^2(G_{\ell'}).$$ \normalsize
Now we proceed analogously with the choice marked with the ``$\diamondsuit$'' signs
and, after finitely many steps, find a lower degree remainder in $\ker f^*$.
This lower degree remainder can be replaced by a linear
combination of squares, holding a new decomposition of $c(\gamma
({\mathbb{R}}^\infty))-c(\varepsilon) $.
This completes the induction. \hfill $\Box$

\begin{proof}[Proof of Theorem 1, (ii)$\Rightarrow$(i)]
Let $c$ be a complexifiable characteristic class. By Remark A and the universality of $\gamma ({\mathbb{R}}^\infty)$,
 the decomposition of Lemma~\ref{decomposition} yields the decomposition
$$c =  c(\varepsilon) +
\sum\limits_{s = 1}^{k-1} \bigcup\limits_{n=1}^{s}  \sum\limits_{j_{n} = 1}^{m_{\vec{j}_{n-1}}}
w_{i_{\vec{j}_{n}}}^2 \cup r_{\vec{j}_{n}}(\varepsilon).$$
 As $c(\varepsilon),  r_{\vec{j}_1}(\varepsilon), ..., r_{\vec{j}_{k-1}}
 (\varepsilon)$ are elements of $\{0, 1 = w_0 = w_0^2\},$
the class~$c$ is in the sub-ring \mbox{${\mathbb{Z}}_2 [ w_i^2]_{i \hspace{1mm}
\in \hspace{1mm}{{\mathbb{N}}} \cup \{0\}}$} of the polynomial
ring of Stiefel-Whitney classes.
\end{proof}
This completes the proof of Theorem 1.

\section{Classes in cohomology with integral coefficients}
We will build on our results obtained for
${\mathbb{Z}}_2$--coefficients and use the mod--2--reduction
homomorphism $$\rho : \Homol^*(-,{\mathbb{Z}})\rightarrow
\Homol^*(-,{\mathbb{Z}}_2)$$ to prove the theorems with
${\mathbb{Z}}$--coefficients stated in the introduction. Define the element $V_I \in \Homol^*(B{\mathcal{O}},{\mathbb{Z}})$ as in the appendix,
 and let $v_I$ be the characteristic class that is $V_I$ on the universal bundle.

\vspace{7mm}

\begin{Lemma} \label{polynomial}
For any real bundle $\xi$, the mod--$2$--reduced class $\rho(v_I^2(\xi))$ equals
$$\left( \sum\limits_{i \hspace{1mm} \in \hspace{1mm} I \cap \{
\frac{1}{2} \}} w_1^2 \cup \bigcup\limits_{j
\hspace{1mm} \in \hspace{1mm} I \setminus \{ i \} }
w_{4j}+\sum\limits_{i \hspace{1mm} \in \hspace{1mm} I \setminus \{
\frac{1}{2} \}} (w_{4i+2}+w_{2}\cup
w_{4i}) \cup \bigcup\limits_{j \hspace{1mm} \in
\hspace{1mm} I \setminus \{ i \} } w_{4j}\right)(\xi \oplus \xi). $$
\end{Lemma}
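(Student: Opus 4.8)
The plan is to compute $\rho(v_I^2(\xi))$ by first reducing $v_I$ itself modulo $2$, squaring the result in $\Homol^*(B{\mathcal{O}},{\mathbb{Z}}_2)$, and then using naturality to pass from the universal bundle to an arbitrary bundle $\xi$ and finally to rewrite everything in terms of $\xi\oplus\xi$ via the square/complexification identity. Since $\rho$ is a ring homomorphism, $\rho(v_I^2) = (\rho(v_I))^2$, so the whole problem is to square the class $\rho(V_I) = Sq^1\!\left(\bigcup_{i\in I}\omega_{2i}\right)$ given by Feshbach's description (appendix A) and identify the answer. I would first expand $Sq^1$ of the product using the Cartan formula: $Sq^1(\bigcup_{i\in I}\omega_{2i}) = \sum_{i\in I} Sq^1(\omega_{2i})\cup\bigcup_{j\in I\setminus\{i\}}\omega_{2j}$, handling the index $\tfrac12$ separately, where the convention is $\omega_{2\cdot\frac12} = \omega_1$. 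Using the Wu formula, $Sq^1\omega_{2i} = \omega_1\omega_{2i} + (2i-1)\omega_{2i+1}$, which modulo $2$ gives $Sq^1\omega_{2i} = \omega_1\omega_{2i}+\omega_{2i+1}$ for $i\in{\mathbb{N}}$, and $Sq^1\omega_1 = \omega_1^2$ for the index $\tfrac12$.

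Next I would square the resulting sum. Because we are in characteristic $2$, squaring a sum kills all cross terms: $\left(\sum_i a_i\right)^2 = \sum_i a_i^2$. So $\rho(v_I^2) = \sum_{i\in I}\left(Sq^1\omega_{2i}\right)^2\cup\bigcup_{j\in I\setminus\{i\}}\omega_{2j}^2$. Now I invoke the key fact already used throughout Section 2 — the square of a Stiefel–Whitney class of a real bundle is the mod-$2$ reduction of a Chern class of the complexification, equivalently $w_p^2(\xi) = w_{2p}(\xi^{\mathbb{C}}) = w_{2p}(\xi\oplus\xi)$ after mod-$2$ reduction, and more generally any square $\alpha^2(\xi)$ of a polynomial $\alpha$ in the $w_p$ can be rewritten as an honest (non-squared) polynomial in the $w_q(\xi\oplus\xi)$ by replacing each $w_p^2$ with $w_{2p}$ of the doubled bundle. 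Concretely, $\omega_{2j}^2 \mapsto w_{4j}(\xi\oplus\xi)$; $(\omega_1^2)^2 = \omega_1^4\mapsto w_1^2\cup$ (the corresponding piece), matching the first sum over $I\cap\{\tfrac12\}$ with the $w_1^2\cup\bigcup_j w_{4j}$ term; and $(Sq^1\omega_{2i})^2 = (\omega_1\omega_{2i}+\omega_{2i+1})^2 = \omega_1^2\omega_{2i}^2 + \omega_{2i+1}^2 \mapsto w_2\cup w_{4i} + w_{4i+2}$ on $\xi\oplus\xi$, which is exactly the summand $(w_{4i+2}+w_2\cup w_{4i})$ appearing in the statement. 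Assembling these, and using naturality of $\rho$, $v_I$, and the Stiefel–Whitney classes with respect to the classifying map of $\xi$ to transport the identity from $B{\mathcal{O}}$ to $\xi$, yields the asserted formula.

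The main obstacle I anticipate is bookkeeping rather than conceptual: one must be careful about the exact meaning of the "product" $\bigcup_{i\in I}\omega_{2i}$ when $\tfrac12\in I$ (so that $\omega_{2\cdot\frac12} = \omega_1$ is one of the factors), about the degree shifts under squaring, and about the precise form of the substitution $w_p^2\mapsto w_{2p}(\xi\oplus\xi)$ — in particular checking that $w_1^4$ correctly becomes $w_1^2\cup w_{4j}\cdots$ with $w_1^2(\xi\oplus\xi) = w_2(\xi\oplus\xi)$ implicit, or rather that the stated $w_1^2$ is shorthand consistent with the square identity. I would also double-check the Wu-formula coefficient $(2i-1)$ reduces to $1$ mod $2$ for all $i\ge 1$, so no terms are silently lost, and confirm that the cross terms in $(\omega_1\omega_{2i}+\omega_{2i+1})^2$ genuinely vanish mod $2$. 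Once these normalisations are pinned down, the identity falls out term by term.
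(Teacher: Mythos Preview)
Your approach is correct and matches the paper's own proof exactly: the paper likewise starts from $\rho(v_I^2(\xi)) = \bigl(Sq^1\bigl(\bigcup_{i\in I} w_{2i}(\xi)\bigr)\bigr)^2$ via Feshbach, then ``expands until it is a polynomial in the Stiefel--Whitney classes'' (your Cartan/Wu step), ``uses that $2=0$'' (your observation that squaring a sum kills cross terms), and finally ``rearranges using the Whitney sum formula and the symmetry of the terms'' (your substitution $w_p^2(\xi)=w_{2p}(\xi\oplus\xi)$). Your flagged bookkeeping concern about the $i=\tfrac12$ contribution is legitimate and is precisely the sort of normalisation the paper suppresses.
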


\begin{proof} By Feshbach's description (in the appendix), the mod--2--reduction is
$$\rho\left(v_I^2(\xi)\right) = \left(Sq^1 \left(\bigcup\limits_{i \hspace{1mm} \in \hspace{1mm} I}
w_{2i}(\xi)\right)\right)^2.$$
We expand this expression until it is a polynomial in the Stiefel-Whitney classes.
Then we rearrange the expression using the Whitney sum formula and the symmetry of the terms.
\end{proof}

\begin{proof}[Proof of Theorem 2]
 For $v_{\{\frac{1}{2}\}}$ and the Pontrjagin classes, the result is obvious. 
Now let $F\rightarrow B$, $G\rightarrow B $ be real bundles with $F^{\mathbb{C}} \cong G^{\mathbb{C}} $.
Forgetting the complex structure, this is $ F \oplus F \cong G \oplus G $. 
By naturality of the Stiefel-Whitney classes,
 for any finite nonempty index set $I$ $\subset$ $(\{ \frac{1}{2}
\} \cup {\mathbb{N}})$, the polynomial
given in Lemma~\ref{polynomial} 
is the same for the arguments $(F \oplus F)$ and $(G \oplus G)$. Applying Lemma~\ref{polynomial}, this
means that $\rho(v_I^2(F)) = \rho(v_I^2(G)) $.
As $V_I^2$ is in the torsion of
$\Homol^*(B{\mathcal{O}},{\mathbb{Z}})$, restricted on which $\rho$ is
injective \cite{Feshbach}{p. 513}, this proves the theorem:
$v_I^2(F) = v_I^2(G) $.
\end{proof}

\begin{proof}[Proof of Theorem 3]
Feshbach \cite{Feshbach}{p. 513} shows that
\begin{center}
$\Homol^*(B{\mathcal{O}},{\mathbb{Z}}) = {\mathbb{Z}}[ \pi_i]_{i
\hspace{1mm} \in \hspace{1mm}{{\mathbb{N}}}} \oplus $
\{2--torsion\},
\end{center}
where $\pi_i$ is the $i$-th Pontrjagin class of the universal bundle.
Then $C = P( p_i) + T$ with $P$ a polynomial in the Pontrjagin classes $p_i$ and $T$ some
2-torsion class. 
So for every real bundle~$\xi$,
 $$\rho(C)(\xi) = P\left(\rho\left( p_i(\xi)\right)\right) +  \rho(T)(\xi).$$
By definition of the Pontrjagin classes,
 $p_i(\xi) = (-1)^i c_{2i}(\xi^{\mathbb{C}})$ ; and
using the reduction $\rho\left( c_{2i}(\xi^{\mathbb{C}})\right)= w_{4i}(\xi \oplus \xi)$
 from Chern classes to Stiefel-Whitney classes,
 further the Whitney sum formula and the symmetry of the summands,
 we deduce $$\rho(C)(\xi) = P(w_{2i}^2(\xi)) +\rho(T)(\xi).$$
It follows from Theorem 1 that the mod-$2$-reduction $\rho(C)(\xi)$
is a polynomial in the squares of Stiefel-Whitney classes;
and hence also $\rho(T)(\xi)$ is a polynomial $ Q( w_{j}^2(\xi))$ in the squares of Stiefel-Whitney classes.
 As according to \cite{Feshbach}{p. 513},
 $\rho$ is  injective on the torsion elements, there is an inverse for the restricted map  ${\rho|_{\{2\mathrm{-torsion}\}}}$,
 lifting $\rho(T)$ back to $T$. So, from
 $$T(\xi) = {\rho|_{\{2\mathrm{-torsion}\}}}^{-1}\left(Q( w_{j}^2(\xi)) \right),$$
we obtain 
$$C(\xi) = P\left( (-1)^i
c_{2i}(\xi^{\mathbb{C}})\right) +
{\rho|_{\{2\mathrm{-torsion}\}}}^{-1}\left(Q\left( \rho(c_{j}(\xi^{\mathbb{C}}))\right)\right).$$ 
\end{proof}

The author would like to thank Graham Ellis and Thomas Schick for support and encouragement,
the latter also for posing the questions treated in this article and giving advice on them.

\bigskip
\bigskip
\bigskip
\bigskip
\bigskip
\bigskip
\bigskip

\section*{Appendix. The cohomology ring of $B{\mathcal{O}}$ with ${\mathbb{Z}}$--coefficients}

The cohomology ring of $B{\mathcal{O}}$ with
${\mathbb{Z}}$--coefficients is known since Thomas \cite{polynomialAlgebras}, \cite{realGrassmann} 
and with all relations between its generators since Brown \cite{Brown} and Feshbach \cite{Feshbach}. 
It can be derived as follows.
Define the set of generators of
$\Homol^*(B{\mathcal{O}}_n,{\mathbb{Z}})$ as \mbox{in \cite{Feshbach}*{definition 1}:}
\\
It consists of the Pontrjagin classes $p_i$ of the universal
bundle over $B{\mathcal{O}}_n$, and classes $V_I$ with $I$ ranging
over all finite nonempty subsets of
$$\left\{ \frac{1}{2} \right\} \cup \left\{ k \hspace{1mm} \in \hspace{1mm}
{\mathbb{N}} \hspace{1mm} \left|\hspace{1mm} 0<k<\frac{n+1}{2} \right. \right\}$$
with the proviso that $I$ does not contain both $\frac{1}{2}$ and
$\frac{n}{2}$, for $n>1$.
\\
According to \cite{Feshbach}*{theorem 2},
$\Homol^*(B{\mathcal{O}}_n,{\mathbb{Z}})$ is for all $n \leq \infty$
isomorphic to the polynomial ring over ${\mathbb{Z}}$ generated by
the above specified elements modulo the ideal generated by the
following six types of relations.
\\
In all relations except the first, the cardinality of $I$ is less
than or equal to that of $J$ and greater than one. 
On the index sets $I$ and $J$, we perform set-theoretic unions ($\cup$), intersections ($\cap$) and differences ($\setminus$).
By convention, $p_{\frac{1}{2}}$ where it occurs means $V_{\left\{\frac{1}{2} \right\}}$.
Also, if $\left\{ \frac{n}{2}, \frac{1}{2} \right\} \subset
\hspace{1mm} I \cup J$, then $V_{I \cup J}$ shall mean $V_{\left\{
\frac{n}{2} \right\}} V_{(I \cup J) \setminus \left\{ \frac{n}{2},
\frac{1}{2} \right\}}$.
As Feshbach remarks, most of the restrictions on $I$ and $J$ are to avoid repeating relations. 

\vspace{2mm}

 $1) \hspace{3mm} 2 V_I = 0$.

\vspace{2mm}

$2) \hspace{3mm} V_I V_J + V_{I \cup J}V_{I \cap J} +V_{I
\setminus J} V_{J \setminus I} \prod\limits_{i \hspace{1mm}
\in \hspace{1mm} I \cap J} p_i = 0$ \hspace{3mm} (for $ I
\cap J \neq \emptyset , \hspace{2mm} I \nsubseteq J)$.

\vspace{2mm}

$3) \hspace{3mm} V_I V_J + \sum\limits_{i \hspace{1mm} \in
\hspace{1mm} I} V_{\{i\}} V_{(J \setminus I) \cup \{i\}}
\prod\limits_{j \hspace{1mm} \in \hspace{1mm} I \setminus
\{i\}} p_j = 0$ \hspace{3mm} (for $ I \subset J $).

\vspace{2mm}

$4) \hspace{3mm} V_I V_J + \sum\limits_{i \hspace{1mm} \in
\hspace{1mm} I} V_{\{i\}} V_{(I \cup J) \setminus \{i\}} = 0$
\hspace{2mm} (for $ I \cap J = \emptyset$; if $I$ and $J$ have the

same cardinality, then the smallest element of $I$ is to be less than that of $J$).

\vspace{2mm}

$5) \hspace{3mm} \sum\limits_{i \hspace{1mm} \in \hspace{1mm}
I} V_{\{i\}} V_{I \setminus \{i\}} = 0$. \vspace{2mm}

$6) \hspace{3mm} V_{\{ \frac{1}{2} \}} p_\frac{n}{2} + V_{\{
\frac{n}{2} \}}^2 = 0$, if $n$ is even.

\vspace{8mm}

Then $\rho (V_I) = Sq^1 ( \bigcup\limits_{i \hspace{1mm} \in
\hspace{1mm} I} w_{2i})$.

\vspace{8mm}

\begin{bibdiv}
\begin{biblist}
\bib{Brown}{article}{
  author={Brown, Edgar H., Jr.},
   title={The cohomology of $B{\rm SO}_{n}$ and $B{\rm O}_{n}$ with
   integer coefficients},
   journal={Proc. Amer. Math. Soc.},
   volume={85},
   date={1982},
   number={2},
   pages={283--288},
   issn={0002-9939},
   review={\MR{652459 (83d:55015)}},
   doi={10.2307/2044298},
}

\bib{Cartan}{book}{
  author={Cartan, Henri},
  title={D\'{e}monstration homologique des th\'{e}or\`{e}mes de p\'{e}riodicit\'{e} de Bott, II: Homologie et cohomologie des groupes classiques et leurs espaces homog\`{e}nes},
   language={French},
   series={in \texttt{S\'eminaire Henri Cartan, 12i\`eme ann\'ee: 1959/60. P\'eriodicit\'e des groupes d'homotopie stables des groupes classiques, d'apr\`es Bott}. Deuxi\`eme \'edition, corrig\'ee},
   publisher={\'Ecole Normale Sup\'erieure, Secr\'etariat math\'ematique, Paris, Fascicule 2, Expos\'e 17. \url{http://www.numdam.org/item?id=SHC_1959-1960__12_2_A7_0} \\},
   date={1961},
   review={\MR{0157863 (28 \#1092)}},
}

\bib{Feshbach}{article}{
   author={Feshbach, Mark},
   title={The integral cohomology rings of the classifying spaces of ${\rm
   O}(n)$ and ${\rm SO}(n)$},
   journal={Indiana Univ. Math. J.},
   volume={32},
   date={1983},
   number={4},
   pages={511--516},
   issn={0022-2518},
   review={\MR{703281 (85b:55025)}},
   doi={10.1512/iumj.1983.32.32036},
}

\bib{MilnorStasheff}{book}{
   author={Milnor, John W.},
   author={Stasheff, James D.},
   title={\emph{\texttt{Characteristic classes}}},
   note={Annals of Mathematics Studies, No. 76},
   publisher={Princeton University Press},
   place={Princeton, N. J.},
   date={1974},
   pages={vii+331},
   review={\MR{0440554 (55 \#13428)}},
}

\bib{polynomialAlgebras}{article}{
   author={Thomas, Emery},
   title={A note on certain polynomial algebras},
   journal={Proc. Amer. Math. Soc.},
   volume={11},
   date={1960},
   pages={410--414},
   issn={0002-9939},
   review={\MR{0121393 (22 \#12131)}},
}

\bib{realGrassmann}{article}{
   author={Thomas, Emery},
   title={On the cohomology of the real Grassmann complexes and the
   characteristic classes of $n$-plane bundles},
   journal={Trans. Amer. Math. Soc.},
   volume={96},
   date={1960},
   pages={67--89},
   issn={0002-9947},
   review={\MR{0121800 (22 \#12530)}},
}

\end{biblist}
\end{bibdiv}
\end{document}